\theoremstyle{definition}
\newtheorem{ntex}{Definition}
\newtheorem{cql}[ntex]{Definition}
\theoremstyle{remark}
\newtheorem{multques}[ntex]{Question}
\newtheorem{orderques}[ntex]{Question}
\newtheorem{disjointques}[ntex]{Question}
\theoremstyle{plain}
\newtheorem{alternating}[ntex]{Theorem}
\newtheorem{neighbours}[ntex]{Lemma}
\newtheorem{symmetric}[ntex]{Lemma}
\newtheorem{notfixed}[ntex]{Lemma}
\newtheorem{elusive1}[ntex]{Corollary}
\newtheorem{ntproduct}[ntex]{Lemma}
\newtheorem{prodsq}[ntex]{Corollary}
\newtheorem{sqneighbours}[ntex]{Lemma}
\newtheorem{pair2}[ntex]{Lemma}
\newtheorem{altrep}[ntex]{Proposition}
\newtheorem{neigh}[ntex]{Lemma}
\newtheorem{partition}[ntex]{Lemma}
\newtheorem{q3m4}[ntex]{Lemma}
\numberwithin{equation}{section}
\numberwithin{ntex}{section}
\DeclareMathOperator{\Prod}{Prod}
\DeclareMathOperator{\Aut}{Aut}
\DeclareMathOperator{\Rep}{Rep}
\DeclareMathOperator{\Diag}{Diag}
\DeclareMathOperator{\Pre}{Pre}
\renewcommand{\Gamma}{\varGamma}
\begin{document}      

\title{Elusive Codes in Hamming Graphs
}
\author{DANIEL R. HAWTIN, NEIL I. GILLESPIE and CHERYL E. PRAEGER}
\address{[Hawtin, Gillespie and Praeger] Centre for the Mathematics of Symmetry and Computation\\
School of Mathematics and Statistics\\
The University of Western Australia\\
35 Stirling Highway, Crawley\\
Western Australia 6009\\
[Praeger] also affiliated with King Abdulaziz University, Jeddah, Saudi Arabia.}



\begin{abstract}
We consider a \emph{code} to be a subset of the vertex set of a \emph{Hamming
graph}. We examine \emph{elusive pairs}, code-group pairs where the code is not
determined by knowledge of its \emph{set of neighbours}. We construct a new
infinite family of elusive pairs, where the group in question acts transitively
on the set of neighbours of the code. In our examples, we find that the
\emph{alphabet size} always divides the \emph{length} of the code, and prove
that there is no elusive pair for the smallest set of parameters for which this
is not the case. We also pose several questions regarding elusive pairs.
\end{abstract}



\thanks{{\it Date:} draft typeset \today\\
{\it 2010 Mathematics Subject Classification:} 94B60, 05E18.\\
{\it Key words and phrases: elusive codes, permutation codes, powerline communication, neighbour transitive codes, automorphism groups} }

\maketitle            

\section{Introduction and Motivation}\label{intro}

Given a group that fixes setwise the set of neighbours of a certain code, the
question of whether the group necessarily fixes the code setwise was considered by
the second and third authors in \cite{ntrcodes}.  In particular, they considered codes in a
\textit{Hamming graph} $\Gamma=H(m,q)$, in which each vertex, and in particular
each codeword, is an $m$-tuple with entries from a set $Q$ of size $q$. In this
context, a codeword $\alpha$ with a single symbol changed, that is a single
error introduced, corresponds to a vertex $\nu$ in $\Gamma$ adjacent to
$\alpha$. We refer to $\nu$ as a \textit{neighbour} of $\alpha$, and for a code
$C$, the \emph{set of neighbours of $C$}, denoted by $\Gamma_1(C)$, consists of all
vertices of $\Gamma$ which are not in $C$, but are adjacent to at least one element
of $C$. 

The group fixing $\Gamma_1(C)$ setwise is a subgroup, $G$, of the automorphism
group, $\Aut(\Gamma)$, of $\Gamma$. Whether $G$ fixes $C$ setwise depends on
certain parameters of the code. One such parameter is the \textit{minimum
distance}, $\delta$, defined to be the smallest distance in $\Gamma$ between
distinct codewords in $C$. In particular, by \cite[Theorem 1]{ntrcodes}, if $C$
is a code in $H(m,q)$ with $\delta\geq 3$ such that $G$ does not fix $C$ setwise, then
one of the following holds:
\begin{itemize}
\item[(1)] $\delta=4$, $q=2$ and $m$ is even,
\item[(2)] $\delta=3$, and $m(q-1)$ is even.
\end{itemize}

The paper \cite{ntrcodes} exhibits an infinite family of codes and groups with
the parameters of (1), but no examples for case (2) are given. The aim of this
paper is to provide infinitely many examples for case (2). All of our examples
have $m$ a multiple of $q$ and we pose several questions about the parameters
and properties of such codes. We make the following definition.

\begin{ntex}
Let $C$ be a code in $\Gamma=H(m,q)$ with minimum distance $\delta$, and let
$X\leq \Aut(\Gamma)$ such that $X$ fixes $\Gamma_1(C)$ setwise, but does not fix
$C$ setwise. Then we call $(C,X)$ an \emph{elusive pair}, with parameters
$(m,q,\delta)$.
\end{ntex}

The paper \cite{ntrcodes} contains no comment on elusive pairs with the
parameters of (2). However the discussion following \cite[Problem 11.1]{neilphd}
asks if there exist elusive pairs with $\delta=3$ and $m(q-1)$ even. We prove
the following theorem.

\begin{alternating}\label{alternating}
Let $q\geq 3$ and $m$ be divisible by $q$. Then there exists a code $C$ with
minimum distance $\delta=3$, and a group $X$ such that $(C,X)$ is an elusive
pair with parameters $(m,q,3)$, and $X$ is transitive on $\Gamma_1(C)$.
\end{alternating}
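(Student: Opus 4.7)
My plan is to give an explicit construction. Write $m = kq$ and partition the coordinates into $k$ consecutive blocks $B_1,\dots,B_k$ of size $q$. After identifying the alphabet $Q$ with $\{1,\dots,q\}$, view the restriction of any $\alpha\in Q^m$ to $B_i$ as a function $\alpha^{(i)}\colon[q]\to[q]$, and let $S_q,A_q\subset Q^q$ denote the sets of permutations and even permutations, and $N\subset Q^q$ the set of ``near-permutations'' (exactly one value missing and one value repeated twice). Define
\[
C := \Bigl\{(\alpha^{(1)},\dots,\alpha^{(k)}) : \alpha^{(i)}\in S_q,\ \prod_{i=1}^k \operatorname{sign}(\alpha^{(i)}) = +1 \Bigr\},
\]
which for $k=1$ is just $A_q$. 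Distinct permutations of $[q]$ at Hamming distance $2$ differ by a transposition and therefore have opposite signs, so two distinct codewords of $C$ agreeing on all but one block must differ in that block in at least $3$ positions, while codewords differing in two or more blocks contribute distance at least $4$; hence $\delta(C)=3$. Since the two permutation-completions of a near-permutation block (obtained by changing either copy of the repeated value to the missing value) have opposite signs, exactly one such completion satisfies the total-sign condition, which shows
\[
\Gamma_1(C) = \bigcup_{i=1}^{k} S_q^{i-1}\times N\times S_q^{k-i}.
\]

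For the group, let $X := (S_q\times S_q)\wr S_k$, embedded in $\Aut(\Gamma)$ so that the first $S_q$ of the $i$th base factor acts diagonally on the values in $B_i$, the second $S_q$ permutes positions inside $B_i$, and the top $S_k$ permutes the blocks. A base element acts on block $i$ by $\alpha^{(i)} \mapsto \sigma_i\alpha^{(i)}\tau_i^{-1}$ and sends $N$ to $N$, while $S_k$ permutes the strata above; together this preserves $\Gamma_1(C)$ setwise. The total sign transforms under $X$ by $\prod_i\operatorname{sign}(\sigma_i)\operatorname{sign}(\tau_i)$, so taking $\sigma_1$ an odd permutation with all other $\sigma_j,\tau_j$ trivial produces an element of $X$ that does not fix $C$ setwise. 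Thus $(C,X)$ is an elusive pair with parameters $(m,q,3)$.

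The remaining content is transitivity of $X$ on $\Gamma_1(C)$. The $S_k$-action permutes the $k$ strata transitively, so it suffices to show $(S_q\times S_q)^k$ is transitive on a single stratum $S_q^{i-1}\times N\times S_q^{k-i}$. By independence of the factors this splits into two pointwise claims: (a) $S_q\times S_q$ is transitive on $S_q$ (immediate from left multiplication alone); and (b) $S_q\times S_q$ is transitive on $N$. For (b), given $\nu,\nu'\in N$ I would first choose $\tau\in S_q$ sending the pair of positions of the repeated value of $\nu$ to that of $\nu'$, and then choose $\sigma\in S_q$ matching the repeated value, the missing value, and the induced bijection on the remaining $q-2$ coordinates; the nontrivial verification is that these prescriptions on $\sigma$ are mutually consistent and define a genuine element of $S_q$. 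I expect the combinatorial check in (b), together with the sign bookkeeping in the elusivity argument, to be the main but essentially routine obstacles; everything else follows directly from the construction.
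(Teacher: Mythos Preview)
Your construction is correct and is essentially the paper's own: your code $C$ is (up to the harmless swap $A_q\leftrightarrow S_q\setminus A_q$ when $k$ is odd) the code $C(q,k)\subset\Prod(C(S_q),k)$ of Definition~\ref{cql}, and your group $(S_q\times S_q)\wr S_k$ is exactly $(\Diag_q(S_q)\rtimes L)\wr S_k$. Your direct verification of transitivity on $N$ via matching repeated positions and values is the same computation the paper packages as $2$-transitivity of $S_q$ in Lemma~\ref{sqnt}, and your sign-bookkeeping for $\delta=3$ and for non-invariance of $C$ parallels Lemmas~\ref{same}--\ref{notfix} and~\ref{pair2}.
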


We prove further in Section~\ref{case} that there are no elusive pairs with
parameters $(4,3,3)$. The following question, however, remains unanswered.

\begin{multques}\label{multques}
Do there exist elusive pairs with parameters $(m,q,3)$ such that $m$ is not a
multiple of $q$? More generally we ask for a determination of the possible
parameters of elusive pairs.
\end{multques}

\subsection{Commentary and Further Questions.}
An assumption frequently made in coding theory is that during transmission of an
encoded message, the probability of an error occurring is independent of the
symbol sent, and its position in the message. In \cite{fpas, ntrcodes}, the second and
third authors introduce \textit{neighbour transitivity} as a group theoretic
analogue of this assumption. A code $C$ is defined to be \textit{neighbour
transitive} if there exists an $X\leq \Aut(\Gamma)$ that fixes setwise and acts
transitively on both $C$ and $\Gamma_1(C)$.

For the infinite family of examples from \cite[Section 5]{ntrcodes}, it is
shown, in that paper, that there exists a group $X$ such that $X$ is transitive
on the set of neighbours of the code. Moreover, it is contained in a larger code
which shares the same neighbour set, and is, in fact, $X$-neighbour transitive.
The construction we give in Section~\ref{ex1} produces an $X$-neighbour
transitive code, $C'$, with minimum distance $\delta'=2$, such that $C'$
contains a code, $C$, with $\delta=3$ and $\Gamma_1(C)=\Gamma_1(C')$. It follows
that $X$ acts transitively on $\Gamma_1(C)$, but does not fix $C$ setwise, and
thus $(C,X)$ is an elusive pair.

For each elusive pair $(C,X)$ in Section \ref{ex1} and \ref{ex2}, and for those
constructed in \cite[Section 5]{ntrcodes}, we observe that if $x\in X$ does not
fix $C$ setwise, then there is only one possibility for the image, $C^x$. Thus,
under $X$, $C$ has two images, the original code $C$ and $C^x$. 

\begin{orderques}\label{orderques}
Does there exist an elusive pair $(C,X)$ such that $C$ has more than two images
under $X$? More generally, if $r=|\{C^x\mid x\in X\}|$, what values of $r$ are
possible?
\end{orderques}

We also note that, in all examples mentioned thus far, not only is $X$
transitive on $\Gamma_1(C)$, but $C$ is $X_C$-neighbour transitive, where $X_C$
is the subgroup of $X$ fixing $C$ setwise. However this is not true in general.
In Section~\ref{except} we construct a family of elusive pairs $(C,X)$, where
$X_C$ is not transitive on $C$ and $X$ is not transitive on $\Gamma_1(C)$. In
fact, for almost all of these examples $(C,X)$ there is no larger group $X'$
such that $(C,X')$ is an elusive pair and $X'$ is transitive on $\Gamma_1(C)$
(see Proposition~\ref{altrep}).

Another interesting feature of the elusive pairs $(C,X)$ in Section \ref{ex1}
and \ref{ex2}, and also those in \cite{ntrcodes}, is that if $x\in X$ does not
fix $C$ setwise, then $C^x$ and $C$ are disjoint. The family of examples in
Section~\ref{except} do not have this property. However they are not
$X_C$-neighbour transitive.

\begin{disjointques}\label{disjointques}
If $(C,X)$ is an elusive pair and $C$ is $X_C$-neighbour transitive, is it true
that, for each $x\in X$, either $C^x=C$ or $C$ and $ C^x$ are disjoint?
\end{disjointques}

\section{Notation}\label{not}

\subsection{Hamming Graphs.}

Let $C$ be a code of ordered $m$-tuples over an alphabet, $Q$, of size $q$. The
Hamming graph, $\Gamma=H(m,q)$, has vertex set consisting of all $m$-tuples with
entries from $Q$, with an edge existing between $m$-tuples which differ in
exactly one position. The \textit{Hamming distance}, $d(\alpha,\beta)$, between
two vertices, $\alpha,\beta\in\Gamma$, is defined as the number of entries in
which the two vertices differ. For a code $C$, the minimum distance, $\delta$,
is defined as $\delta=\min\{d(\alpha,\beta) \mid \alpha, \beta \in C,
\alpha\neq\beta\}$. For a vertex $\alpha\in\Gamma$, we denote the set of
vertices which are distance $r$ from $\alpha$ by
$\Gamma_r(\alpha)=\{\beta\in\Gamma \mid d(\alpha,\beta)=r\}$. We call
$\Gamma_1(\alpha)$ the \textit{set of neighbours of $\alpha$}.

For a vertex $\alpha\in\Gamma$, define $d(\alpha,C)=\min\{d(\alpha,\beta) \mid
\beta\in C\}$. This allows us to define the \textit{covering radius}, $\rho =
\max\{d(\alpha,C)\mid\alpha\in\Gamma\}$, and for any $r\leq \rho$ we define
$\Gamma_r(C)=\{\alpha\in\Gamma \mid d(\alpha,C)=r\}$. We refer to $\Gamma_1(C)$
as the \textit{set of neighbours of $C$}. Note that if $\delta\geq 2$,
$\Gamma_1(C)=\cup_{\alpha\in C}\Gamma_1(\alpha)$.

The automorphism group of the Hamming graph, $\Aut(\Gamma)$, is the semi-direct
product $N\rtimes L$, where $N\cong S_q^m$ and $L\cong S_m$, see \cite[Theorem
9.2.1]{brouwer}. Let $g=(g_1,\dots,g_m)\in N$, $\sigma\in L$ and
$\alpha=(\alpha_1,\ldots,\alpha_m)\in\Gamma$. Then $g$ and $\sigma$ act on
$\alpha$ as follows: 
\begin{equation*}
\alpha^g =(\alpha_1^{g_1},\ldots,\alpha_m^{g_m}),\quad\text{and}\quad
\alpha^\sigma=(\alpha_{1^{\sigma^{-1}}},\ldots,\alpha_{m^{\sigma^{-1}}}).
\end{equation*}

The automorphism group of a code $C\subseteq\Gamma$, is defined to be the
setwise stabiliser of $C$ in $\Aut(\Gamma)$, and denoted by $\Aut(C)$.

\subsection{Permutation Codes.}\label{permcodes}

Let $Q=\{1,\ldots,q\}$ and $S_q$ be the symmetric group of $Q$. For any
permutation $g\in S_q$, we associate with it the vertex
$\alpha(g)=(1^g,\ldots,q^g)$ in $H(q,q)$. Furthermore, for $T\subseteq S_q$, we
define the \textit{permutation code} $C(T)$ to be $C(T)=\{\alpha(g)\mid g\in
T\}$.

Permutation codes were first studied in the 1970's, in particular by Blake,
Cohen and Deza in \cite{permcodes}, but have recently gained attention due to a
potential application in powerline communication, where information is
transmitted as a string of frequencies through existing electrical
infrastructure. This approach presents extra problems for us to consider. We
need the power output to remain as constant as possible, as well as there being
extra noise considerations to take into account. Permutation codes have been
suggested as a solution to both of these problems, see \cite{powerlinecomm, codedmod}. 
For an overview of the subject see \cite{Huczynska}. Bailey
gives a decoding algorithm for permutation codes generated by groups in
\cite{bailey}.

In \cite{blake}, Blake shows how to find the minimum distance of any permutation
code constructed from a \textit{sharply $k$-transitive} group. A group $G$
acting on a set $\Omega$ is sharply $k$-transitive if for any two ordered
$k$-tuples of distinct points, there is a unique element of $G$ mapping the first to the second. So
the identity element is the unique element fixing $k$ points, and thus, for
$g_1, g_2\in G$ with $g_1\neq g_2$, we have $d(\alpha(g_1),\alpha(g_2))\geq
q-k+1$. So $\delta\geq q-k+1$. For example, $S_q$ is sharply $(q-1)$-transitive,
and if we let $g=(1 2)\in S_q$ then $d(\alpha(1),\alpha(g))=2$, thus $C(S_q)$
has minimum distance 2. Also $A_q$ is sharply $(q-2)$-transitive, so $C(A_q)$
has minimum distance $3$. In the same paper, Blake also briefly outlines a
decoding algorithm for $C(A_q)$.

Let $i,j\in Q$, $i\neq j$ and $g\in S_q$, and define $\nu(\alpha(g),i,j)$ to be
the vertex in $H(q,q)$ with $k$-th entry given by
\begin{equation*}
\nu(\alpha(g),i,j)|_k=\left\{
\begin{array}{ll}
k^g & \text{if } k\neq i\\
j^g & \text{if } k=i
\end{array}\right. .
\end{equation*}
If $i\neq j$, then $\nu(\alpha(g),i,j)$ differs from $\alpha(g)$ at the $i$-th
entry only, and thus $\nu(\alpha(g),i,j)\in \Gamma_1(\alpha(g))$. Each of the
$q(q-1)$ neighbours of $\alpha(g)$ is of this form; there are $q$ choices for
$i$ and, given $i$, there are $q-1$ choices for $j^g\neq i^g$, and hence of $j$.

For $y\in S_q$, we denote $x_y=(y,\ldots,y)\in N$, and define
$\Diag_q(S_q)=\{x_y \mid y\in S_q\}\leq N$. Also, for $z\in S_q$ let $\sigma(z)$
be the permutation in the top group $L$, induced by $z$.  Let $g,y,z\in
S_q$, $i\neq j$, $x_y= (y,\ldots,y)\in \Diag_q(S_q)$ and $\sigma(z)\in L$. Then, by \cite[Lemmas 5.1.1 and 5.1.1.3]{neilphd},  
\begin{equation}\label{act}
\alpha(g)^{x_y\sigma(z)}=\alpha(z^{-1}gy),\quad\text{and}
\quad\nu(\alpha(g),i,j)^{x_y\sigma(z)}=\nu(\alpha(z^{-1}gy),i^z,j^z).
\end{equation}

\section{Elusive Pairs}\label{example}

In this section we construct the examples that contribute to the proof of
Theorem~\ref{alternating}, as well as showing that there is no elusive pair for
the smallest set of parameters where $q$ does not divide $m$.

\subsection{Example 1.}\label{ex1}

We show that $(C(A_q),\Diag_q(S_q)\rtimes L)$ is an elusive pair, with
parameters $(q,q,3)$. We begin by showing that the larger code, $C(S_q)$, with
minimum distance two, has the same neighbour set as $C(A_q)$.

\begin{neighbours}\label{same}
For distinct $i,j\in \{1,\ldots,q\}$ and $g\in A_q$, let $g'=(ij)g$. Then $g'\in
S_q\setminus A_q$ and $\nu(\alpha(g),i,j)=\nu(\alpha(g'),j,i)$. Thus
$\Gamma_1(C(A_q))=\Gamma_1(C(S_q\setminus A_q))=\Gamma_1(C(S_q))$.
\end{neighbours}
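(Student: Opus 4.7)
The plan is to verify the coordinate-wise identity $\nu(\alpha(g),i,j)=\nu(\alpha(g'),j,i)$ directly from the piecewise definition of $\nu$, and then to deduce the three set equalities by combining this identity with the partition $S_q=A_q\cup(S_q\setminus A_q)$ and the fact (noted earlier in the excerpt) that $C(S_q)$ has minimum distance $2$.

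For the parity assertion, $(ij)$ is odd and $g\in A_q$ is even, so $g'=(ij)g\in S_q\setminus A_q$. For the entry-wise claim I would first record that $k^{g'}=(k^{(ij)})^g$ yields $k^{g'}=k^g$ for $k\notin\{i,j\}$, while $i^{g'}=j^g$ and $j^{g'}=i^g$. I would then compare the two vertices position by position: at $k\notin\{i,j\}$ both equal $k^g$; at $k=i$ the left is $j^g$ by definition of $\nu$ and the right is $i^{g'}=j^g$; at $k=j$ the left is $j^g$ (because $j\neq i$) and the right is again $i^{g'}=j^g$.

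For the set equalities, an arbitrary element of $\Gamma_1(C(A_q))$ has the form $\nu(\alpha(g),i,j)$ with $g\in A_q$ and $i\ne j$; by the identity it equals $\nu(\alpha(g'),j,i)$, a neighbour of $\alpha(g')\in C(S_q\setminus A_q)$, and it is not itself a codeword of $C(S_q)$ because $C(S_q)$ has minimum distance $2$. This gives $\Gamma_1(C(A_q))\subseteq\Gamma_1(C(S_q\setminus A_q))$, and the reverse inclusion is obtained symmetrically by starting from $h\in S_q\setminus A_q$ and applying the identity to $g=(ij)h\in A_q$. Finally, $\Gamma_1(C(S_q))$ coincides with this common neighbour set because $C(S_q)=C(A_q)\cup C(S_q\setminus A_q)$ and no neighbour of either subcode lies in $C(S_q)$, once again by the minimum distance $2$ of $C(S_q)$. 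The only pitfall is the swap of $(i,j)$ to $(j,i)$ in the $\nu$-notation on the two sides, which demands care when handling the $k=i$ and $k=j$ coordinates; beyond that the argument is purely bookkeeping.
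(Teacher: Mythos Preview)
Your proof is correct and follows essentially the same approach as the paper: a direct coordinate-by-coordinate verification of the identity $\nu(\alpha(g),i,j)=\nu(\alpha(g'),j,i)$ using the relations $i^{g'}=j^g$, $j^{g'}=i^g$, and $k^{g'}=k^g$ for $k\notin\{i,j\}$, followed by the observation that the set equalities follow from the minimum distance of $C(S_q)$ being $2$. You spell out the derivation of the three neighbour-set equalities in slightly more detail than the paper (which simply says ``the rest follows since $C(S_q)$ has minimum distance $\delta=2$''), but the argument is the same.
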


\begin{proof}
Clearly $g'\in S_q\setminus A_q$. We have
\begin{align*}
\nu(\alpha(g),i,j)|_k&=\left\{
\begin{array}{ll}
k^{g} & \text{if } k\neq i\\
j^{g} & \text{if } k=i
\end{array}\right. \\
&=\left\{
\begin{array}{ll}
k^{g'} & \text{if } k\neq i,j\\
i^{g'} & \text{if } k=i\\
i^{g'} & \text{if } k=j
\end{array}\right. \\
&=\left\{
\begin{array}{ll}
k^{g'} & \text{if } k\neq j\\
i^{g'} & \text{if } k=j
\end{array}\right.\\
&=\nu(\alpha(g'),j,i)|_k \text{ } .
\end{align*}
Thus $\nu(\alpha(g),i,j)=\nu(\alpha(g'),j,i)$, and the rest follows since
$C(S_q)$ has minimum distance $\delta=2$.
\end{proof}

Lemma~\ref{sqnt} is a consequence of Lemma 5.1.1.5 in \cite{neilphd}, but we
give a short proof here for completeness.

\begin{symmetric}\label{sqnt}
$C(S_q)$ is $(\Diag_q(S_q)\rtimes L)$-neighbour transitive.
\end{symmetric}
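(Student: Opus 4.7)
The plan is to verify the two transitivity conditions required for neighbour transitivity directly, using the action formulas in equation~(\ref{act}) from Section~\ref{permcodes}. Let $G=\Diag_q(S_q)\rtimes L$. Since each element of $G$ has the form $x_y\sigma(z)$ with $y,z\in S_q$, and (\ref{act}) tells us exactly how such elements act on vertices $\alpha(g)$ and on the neighbours $\nu(\alpha(g),i,j)$, both transitivity statements reduce to elementary permutation-group facts about $S_q$.

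First I would prove that $G$ is transitive on $C(S_q)$. Given any two codewords $\alpha(g_1),\alpha(g_2)\in C(S_q)$, by (\ref{act}) we have $\alpha(g_1)^{x_y}=\alpha(g_1 y)$, so choosing $y=g_1^{-1}g_2\in S_q$ gives $\alpha(g_1)^{x_y}=\alpha(g_2)$. Hence even the diagonal subgroup $\Diag_q(S_q)$ alone is transitive on $C(S_q)$.

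Next I would prove transitivity on $\Gamma_1(C(S_q))$. A general neighbour has the form $\nu(\alpha(g),i,j)$ with $g\in S_q$ and $i\neq j$ in $\{1,\ldots,q\}$, and (\ref{act}) gives
\begin{equation*}
\nu(\alpha(g_1),i_1,j_1)^{x_y\sigma(z)}=\nu(\alpha(z^{-1}g_1 y),i_1^z,j_1^z).
\end{equation*}
To map $\nu(\alpha(g_1),i_1,j_1)$ to $\nu(\alpha(g_2),i_2,j_2)$ I would first use $2$-transitivity of $S_q$ on ordered pairs of distinct points to select $z\in S_q$ with $i_1^z=i_2$ and $j_1^z=j_2$; then set $y=g_1^{-1}zg_2$, which forces $z^{-1}g_1 y=g_2$. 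The element $x_y\sigma(z)\in G$ then performs the required map.

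I do not expect any serious obstacle: the construction of $\Diag_q(S_q)\rtimes L$ is tailored precisely so that the diagonal factor handles the codeword part of a neighbour $\nu(\alpha(g),i,j)$, while the top group $L$ handles the coordinate pair $(i,j)$. The only point to keep track of is that the choices of $y$ and $z$ can be made independently, which is exactly what the semi-direct product structure provides. Observing that $C(S_q)\subseteq\Gamma$ and $\Gamma_1(C(S_q))$ are each preserved setwise by $G$ (immediate from (\ref{act})) then completes the verification that $C(S_q)$ is $G$-neighbour transitive.
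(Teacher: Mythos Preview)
Your proposal is correct and follows essentially the same approach as the paper: both use equation~(\ref{act}) with the choice $y=g_1^{-1}g_2$ to establish transitivity on $C(S_q)$, and both use $2$-transitivity of $S_q$ to choose $z$ with $i_1^z=i_2$, $j_1^z=j_2$ and then set $y=g_1^{-1}zg_2$ to obtain transitivity on $\Gamma_1(C(S_q))$. The paper's proof is nearly identical in structure and detail.
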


\begin{proof} By (\ref{act}), $C(S_q)$ is fixed setwise by
$X=\Diag_q(S_q)\rtimes L$. Let $g_1,g_2\in S_q$, and let $y=g_1^{-1}g_2$. Then
$\alpha(g_1)^{x_y}=\alpha(g_1 g_1^{-1} g_2)=\alpha(g_2)$, by (\ref{act}) (i),
and it follows that $X$ is transitive on $C(S_q)$. Now let $i_1\neq j_1$,
$i_2\neq j_2$. Since $S_q$ acts 2-transitively on $Q$, there exists $z\in S_q$
such that $i_1^z=i_2$ and $j_1^z=j_2$. Let $y=g_1^{-1}z g_2$. Then, again using
(\ref{act}), $\nu(\alpha(g_1),i_1,j_1)^{x_y\sigma(z)}=\nu(\alpha(z^{-1}g_1
g_1^{-1}zg_2),i_1^z,j_1^z)= \nu(\alpha(g_2),i_2,j_2)$. Therefore, $X$ acts
transitively on $\Gamma_1(C(S_q))$ and so $C(S_q)$ is $X$-neighbour transitive.
\end{proof}

\begin{notfixed}\label{notfix}
Let $x=x_y \sigma(z)\in\Diag_q(S_q)\rtimes L$. Then $C(A_q)^x=C(z^{-1}yA_q)$. In
particular $C(A_q)^x=C(A_q)$ if and only if $z^{-1}y\in A_q$.
\end{notfixed}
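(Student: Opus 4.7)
The plan is to apply the action formula (\ref{act}) directly and then reduce the resulting set of permutations to a coset of $A_q$ using the normality of $A_q$ in $S_q$.

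First I would write out $C(A_q)^x$ explicitly. By definition $C(A_q)=\{\alpha(g)\mid g\in A_q\}$, and by the first part of (\ref{act}) we have $\alpha(g)^{x_y\sigma(z)}=\alpha(z^{-1}gy)$. Hence
\begin{equation*}
C(A_q)^x=\{\alpha(z^{-1}gy)\mid g\in A_q\}=C(\{z^{-1}gy\mid g\in A_q\}).
\end{equation*}
So it suffices to identify the set $T=\{z^{-1}gy\mid g\in A_q\}$ as a left coset of $A_q$.

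Next I would rewrite $z^{-1}gy=(z^{-1}y)(y^{-1}gy)$. Since $A_q$ is normal in $S_q$, the map $g\mapsto y^{-1}gy$ is a bijection from $A_q$ onto $A_q$, so $\{y^{-1}gy\mid g\in A_q\}=A_q$. This gives $T=z^{-1}y\cdot A_q$, and therefore $C(A_q)^x=C(z^{-1}yA_q)$, which is the first claim.

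For the ``in particular'' statement, I would use that the map $S_q\to C(S_q)$, $g\mapsto\alpha(g)$, is a bijection (distinct permutations yield distinct $q$-tuples), so $C(T_1)=C(T_2)$ if and only if $T_1=T_2$. Thus $C(A_q)^x=C(A_q)$ if and only if $z^{-1}yA_q=A_q$, which holds if and only if $z^{-1}y\in A_q$. There is no real obstacle here: the result is essentially a direct translation of (\ref{act}) combined with the normality of $A_q$ in $S_q$; the only small care needed is the reindexing step $z^{-1}gy=(z^{-1}y)(y^{-1}gy)$, which is where normality enters.
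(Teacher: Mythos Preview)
Your proof is correct and follows essentially the same approach as the paper: both apply (\ref{act}) to obtain $\alpha(g)^x=\alpha(z^{-1}gy)$, rewrite $z^{-1}gy=(z^{-1}y)(y^{-1}gy)$, and invoke the normality of $A_q$ in $S_q$. Your version is simply more explicit about the bijectivity of $g\mapsto\alpha(g)$ for the ``in particular'' clause, which the paper leaves implicit.
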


\begin{proof}
By (\ref{act}), $\alpha(g)^x=\alpha(z^{-1}gy)=\alpha(z^{-1}yy^{-1}gy)$ for all $g\in A_q$.  As
$A_q$ is a normal subgroup of $S_q$, the assertion follows.
\end{proof}

\begin{elusive1}\label{elusive1}
$(C(A_q),\Diag_q(S_q)\rtimes L)$ is an elusive pair with parameters $(q,q,3)$.
\end{elusive1}

\begin{proof}
By Lemma \ref{sqnt}, $X=\Diag_q(S_q)\rtimes L$ is transitive on
$\Gamma_1(C(S_q))$ and so, by Lemma~\ref{same}, $X$ is transitive on
$\Gamma_1(C(A_q))$. By Lemma~\ref{notfix}, $C(A_q)$ is not fixed by $X$. Thus
$(C(A_q),X)$ is an elusive pair.
\end{proof} 

For $C(A_q)$, $m=q$, so $m(q-1)$ is even, and also $\delta=3$, as mentioned in
Section~\ref{permcodes}. So $C(A_q)$, indeed satisfies (2). By
Lemma~\ref{notfix}, each element of $\Diag_q(S_q)\rtimes L$ either fixes both
$C(A_q)$ and $C(S_q\setminus A_q)$ setwise, or swaps them.  Note also that
$C(A_q)\cup C(S_q\setminus A_q)=C(S_q)$.

\subsection{Example 2.}\label{ex2}

The \textit{product construction} of a code $C$ in $H(m,q)$, is defined in
\cite[Section 4.7]{neilphd} as follows:
\begin{equation*}
\Prod(C,l)=\{(\alpha_1,\ldots ,\alpha_l)| \alpha_i \in C, \forall i\},
\end{equation*}
which is a code in $H(lm,q)$. We use this construction for the next family of
examples. First we set up the required notation. 

Previously we used a subscript to refer simply to the $k$-th entry of a vertex,
however there is now some ambiguity. We may wish to refer to the $k$-th entry of
a vertex in $H(lm,q)$, or the $k$-th entry, $\alpha_k$, of the $l$-tuple
$(\alpha_1,\ldots,\alpha_l)\in H(lm,q)$, which is itself a vertex in $H(m,q)$.
In this section we always mean the $k$-th entry in $(\alpha_1,\ldots,\alpha_l)$,
so that ``$k$-th entries'' are vertices of $H(m,q)$.

Let $C\subseteq H(m,q)$. By \cite[Lemma 4.7.1]{neilphd}, $C$ and $\Prod(C,l)$
have the same minimum distance, $\delta$ say. If $\delta\geq 2$ then, given
$\boldsymbol{\alpha}=(\alpha_1,\ldots,\alpha_l)\in \Prod(C,l)$, replacing a
single $\alpha_i$ with $\nu\in\Gamma_1(\alpha_i)$ yields a neighbour of
$\boldsymbol\alpha$, which we denote by $\mu(\boldsymbol\alpha,\nu,i)$, where
\begin{equation*}
\mu(\boldsymbol\alpha,\nu,i)|_k=
\left\{
\begin{array}{ll}
\alpha_k & \text{if } k\neq i\\
\nu& \text{if } k=i.
\end{array}\right. 
\end{equation*}
 There are $m(q-1)$ choices for $\nu$, and $l$ choices for $i$, and so all the
$lm(q-1)$ neighbours of $\boldsymbol\alpha$ have this form. Given an action of
$X$ on $\Gamma=H(m,q)$, we can define an action of $X\wr S_l$ on the cartesian
product of $l$ copies of $\Gamma$. Let
$\boldsymbol\alpha=(\alpha_1,\ldots,\alpha_l)$, with each $\alpha_i\in\Gamma$,
$(x_1,\ldots,x_l)\in X^l$, $\sigma\in S_l$. Then
\begin{equation}\label{productaction}
\boldsymbol\alpha^{(x_1,\ldots,x_l)}=(\alpha_1^{x_1},\ldots,\alpha_l^{x_l}),
\quad\text{and}\quad
\boldsymbol\alpha^\sigma=(\alpha_{1^{\sigma^{-1}}},\ldots,\alpha_{l^{\sigma^{-1}
}}),
\end{equation} 
and these elements act on the neighbours of $\Prod(C,l)$ as follows:
\begin{align*}
\mu(\boldsymbol\alpha,\nu,i)^{(x_1,\ldots,x_l)}|_k
&=\left\{
\begin{array}{ll}
\alpha_k^{x_k} & \text{if } k\neq i\\
\nu^{x_i}& \text{if } k=i
\end{array}\right. \\
&=\mu(\boldsymbol\alpha^{(x_1,\ldots,x_l)},\nu^{x_i},i)|_k\text{ }.
\end{align*}
Suppose $k^\sigma=n$. Then,
\begin{align*}
\mu(\boldsymbol\alpha,\nu,i)^\sigma|_n=\mu(\boldsymbol\alpha,\nu,i)|_k
&=\left\{
\begin{array}{ll}
\alpha_k & \text{if } k\neq i\\
\nu\phantom{_k}& \text{if } k=i
\end{array}\right. \\
&=\left\{
\begin{array}{ll}
\alpha_{n^{\sigma^{-1}}} & \text{if } n^{\sigma^{-1}}\neq i\\
\nu\phantom{_{l^{\sigma^{-1}}}}& \text{if } n^{\sigma^{-1}}=i
\end{array}\right. \\
&=\left\{
\begin{array}{ll}
\alpha_{n^{\sigma^{-1}}} & \text{if } n\neq i^\sigma\\
\nu\phantom{_{l^{\sigma^{-1}}}}& \text{if } n=i^\sigma
\end{array}\right. \\
&=\mu(\boldsymbol\alpha^\sigma,\nu,i^\sigma)|_n\text{ }.
\end{align*}
Which gives 
\begin{equation*}\label{base}
\mu(\boldsymbol\alpha,\nu,i)^{(x_1,\ldots,x_l)}=\mu(\boldsymbol\alpha^{(x_1,
\ldots,x_l)},\nu^{x_i},i),\quad\text{and}\quad \mu(\boldsymbol\alpha,\nu,i)^\sigma=\mu(\boldsymbol\alpha^\sigma,\nu,i^\sigma).
\end{equation*}

If $C$ has minimum distance $\delta\geq 3$ in $H(m,q)$, then each neighbour of $\Prod(C,\ell)$
has a unique representation of the form $\mu(\boldsymbol\alpha,\nu,i)$. This, however, is not
the case when $\delta\leq 2$.  Let $C$ be a code with $\delta=2$ and $\alpha,\beta\in C$ such that
$d(\alpha,\beta)=2$, and consider $\nu\in\Gamma_1(\alpha)\cap\Gamma_1(\beta)$.  
Then, for $\boldsymbol\alpha=(\alpha,\ldots,\alpha)$ and
$\boldsymbol\beta=(\beta,\alpha,\ldots,\alpha)$ in $\Prod(C,l)$, it follows that 
$\mu(\boldsymbol\alpha,\nu,1)=\mu(\boldsymbol\beta,\nu,1)$.  Gillespie \cite[Lemma 4.7.3]{neilphd} 
proved the next result for codes with $\delta\geq 3$, however it is in 
fact true for arbitrary minimum distance.  

\begin{ntproduct}\label{ntprod} Let $C$ be an $X$-neighbour transitive code
in $H(m,q)$.  Then $\Prod(C,l)$ is $X\wr S_l$-neighbour transitive in $H(lm,q)$.
\end{ntproduct}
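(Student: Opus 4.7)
The plan is to verify the three defining conditions of $Y:=X\wr S_l$-neighbour transitivity for $\Prod(C,l)$: that $Y$ stabilises $\Prod(C,l)$ setwise, that $Y$ acts transitively on $\Prod(C,l)$, and that $Y$ acts transitively on $\Gamma_1(\Prod(C,l))$. The first two follow quickly from the actions recorded in (\ref{productaction}) together with the assumption that $X$ stabilises and acts transitively on $C$: coordinate-wise application of elements of $X$, together with any permutation of the $l$ entries, preserves $\Prod(C,l)$, and given $\boldsymbol\alpha,\boldsymbol\beta\in\Prod(C,l)$ one chooses $x_k\in X$ with $\alpha_k^{x_k}=\beta_k$ for each $k$ to map one to the other using the base group alone.

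The substance of the lemma lies in transitivity on $\Gamma_1(\Prod(C,l))$. The first step is the observation that, even though a neighbour of $\Prod(C,l)$ may admit more than one representation of the form $\mu(\boldsymbol\alpha,\nu,i)$ when $\delta\leq 2$, the distinguished position $i$ and entry $\nu$ are uniquely determined by the vertex: since $\mu\notin\Prod(C,l)$ is adjacent to some member of $\Prod(C,l)$, exactly one of the $l$ entries of $\mu$ fails to lie in $C$, and that entry must lie in $\Gamma_1(C)$. Given two neighbours $\mu_1=\mu(\boldsymbol\alpha,\nu,i)$ and $\mu_2=\mu(\boldsymbol\beta,\nu',j)$, my plan is to first apply $\sigma\in S_l$ with $i^\sigma=j$, so that $\mu_1^\sigma=\mu(\boldsymbol\alpha^\sigma,\nu,j)$ by the displayed identity preceding the lemma. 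Then, writing $\boldsymbol\gamma=\boldsymbol\alpha^\sigma$, I would pick $(x_1,\ldots,x_l)\in X^l$ with $\gamma_k^{x_k}=\beta_k$ for each $k\neq j$ (using transitivity of $X$ on $C$) and with $\nu^{x_j}=\nu'$ (using transitivity of $X$ on $\Gamma_1(C)$). The formula for the action then sends $\mu_1$ under $\sigma(x_1,\ldots,x_l)$ to a vertex whose $k$-th entry equals $\beta_k$ for $k\neq j$ and whose $j$-th entry equals $\nu'$, which coincides with $\mu_2$ entry by entry.

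The main obstacle is the non-uniqueness of representation when $\delta\leq 2$, which is exactly what prevents Gillespie's argument in \cite{neilphd} from applying verbatim: the element $x_j$ sends $\gamma_j$ to some codeword adjacent to $\nu'$, and this codeword need not equal $\beta_j$. The plan circumvents this because the image of $\mu_1$ is compared with $\mu_2$ as a vertex of $H(lm,q)$, and that comparison depends only on the uniquely determined position $j$ and value $\nu'$, not on which codeword is chosen to express the neighbour.
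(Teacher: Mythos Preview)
Your proof is correct and follows essentially the same route as the paper: permute the distinguished coordinate into place with an element of $S_l$, then use base-group elements to adjust each entry, invoking transitivity of $X$ on $C$ for the codeword entries and on $\Gamma_1(C)$ for the error entry. Your explicit observation that the distinguished position $i$ and entry $\nu$ are uniquely determined by the vertex (even when the representing codeword $\boldsymbol\alpha$ is not) is a genuine clarification the paper's proof leaves implicit, since the paper simply asserts $\mu(\boldsymbol\alpha,\nu,i)^{\sigma x}=\mu(\boldsymbol\beta,\nu',j)$ without noting that the $j$-th entry of the image codeword need not equal $\beta_j$.
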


\begin{proof}
It follows from (\ref{productaction}) that $X^l$ is transitive on $\Prod(C,l)$
since $X$ is transitive on $C$. To map the neighbour
$\mu(\boldsymbol\alpha,\nu,i)$ to the neighbour
$\mu(\boldsymbol\beta,\nu',j)$, we first apply $\sigma=(ij)\in S_l$, so
$\mu(\boldsymbol\alpha,\nu,i)^\sigma=\mu(\boldsymbol\alpha^\sigma,\nu,j)$.  As $C$ and $\Gamma_1(C)$
are both $X$-orbits in $H(m,q)$, there exists $x_k\in X$ such that $\alpha_k^{x_k}=\beta_k$
for $k\neq i,j$, there exists $x_i\in X$ such that $\alpha_j^{x_i}=\beta_i$, and there exists $x_j\in X$
such that $\nu^{x_j}=\nu'$.  By letting $x=(x_1,\ldots,x_l)\in X^l$, it follows that 
$\mu(\boldsymbol\alpha,\nu,i)^{\sigma x}=\mu(\boldsymbol\beta,\nu',j)$.
\end{proof}

The next result follows directly from Lemmas~\ref{sqnt} and
\ref{ntprod}.

\begin{prodsq}\label{psq}
$\Prod(C(S_q),l)$ is $(\Diag_q(S_q)\rtimes L)\wr S_l$-neighbour transitive.
\end{prodsq}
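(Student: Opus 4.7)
The plan is to apply Lemma~\ref{ntprod} with the specific choice $C = C(S_q)$ (a code in $H(q,q)$) and $X = \Diag_q(S_q) \rtimes L$. The hypothesis required by Lemma~\ref{ntprod} is that $C$ be $X$-neighbour transitive, and this is precisely the content of Lemma~\ref{sqnt}. With these substitutions, the conclusion of Lemma~\ref{ntprod} asserts that $\Prod(C(S_q), l)$ is $(\Diag_q(S_q) \rtimes L) \wr S_l$-neighbour transitive in $H(lq, q)$, which is exactly the statement of the corollary.

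Since the corollary is declared as following directly from the two cited lemmas, I would keep the proof to essentially a single sentence: invoke Lemma~\ref{sqnt} to obtain the neighbour transitivity of $C(S_q)$ under $X = \Diag_q(S_q) \rtimes L$, and then feed this into Lemma~\ref{ntprod} with $m = q$ to conclude. There is no additional verification needed, because Lemma~\ref{ntprod} was proved for arbitrary minimum distance (in particular the case $\delta = 2$ relevant here for $C(S_q)$ is covered), so the fact that $C(S_q)$ has minimum distance only $2$ creates no obstruction.

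There is, strictly speaking, no hard step. The only thing one might wish to double-check is that the action of $X \wr S_l$ on the vertex set of $H(lq, q)$ used implicitly in the statement agrees with the product-of-actions construction described by the formulas in~(\ref{productaction}); this is immediate from the way $X \leq \Aut(H(q,q))$ was defined, so no obstacle arises. In short, the corollary is a direct specialisation, and the proof can be written as a one-line composition of Lemmas~\ref{sqnt} and~\ref{ntprod}.
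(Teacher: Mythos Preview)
Your proposal is correct and matches the paper's approach exactly: the corollary is stated as following directly from Lemmas~\ref{sqnt} and~\ref{ntprod}, with no separate proof given. Your observation that Lemma~\ref{ntprod} covers the $\delta=2$ case of $C(S_q)$ is precisely the point the paper makes just before stating that lemma.
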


\begin{cql}\label{cql}
Let $C(q,l)$ be the subset of $\Prod(C(S_q),l)$ where
$(\alpha(g_1),\ldots,\alpha(g_l))\in C(q,l)$ if and only if $|\{i\mid g_i\in
A_q\}|$ is even. 
\end{cql}

In the remainder of the section we show that $(C(q,l),(\Diag_q(S_q)\rtimes L)\wr
S_l)$ is an elusive pair. 

\begin{sqneighbours}\label{sqn}
$\Gamma_1(\Prod(C(S_q),l))=\Gamma_1(C(q,l))$, with $C(q,l)$ as in
Definition~\ref{cql}.
\end{sqneighbours}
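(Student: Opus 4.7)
The plan is to prove both inclusions $\Gamma_1(C(q,l)) \subseteq \Gamma_1(\Prod(C(S_q),l))$ and $\Gamma_1(\Prod(C(S_q),l)) \subseteq \Gamma_1(C(q,l))$. The first inclusion is essentially immediate from $C(q,l)\subseteq \Prod(C(S_q),l)$: any vertex adjacent to a codeword of $C(q,l)$ is adjacent to a codeword of $\Prod(C(S_q),l)$. One still has to check that a vertex $\mu\in \Gamma_1(C(q,l))$ cannot itself lie in $\Prod(C(S_q),l)$; this follows because $\Prod(C(S_q),l)$ has minimum distance $2$ (by \cite[Lemma~4.7.1]{neilphd} applied to $C(S_q)$), so otherwise $\mu$ and its adjacent codeword in $C(q,l)$ would be two distinct codewords of $\Prod(C(S_q),l)$ at distance $1$, a contradiction.

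The reverse inclusion is where the content lies. Given $\mu\in\Gamma_1(\Prod(C(S_q),l))$, I would write $\mu=\mu(\boldsymbol\alpha,\nu(\alpha(g_i),j,k),i)$ for some $\boldsymbol\alpha=(\alpha(g_1),\ldots,\alpha(g_l))\in\Prod(C(S_q),l)$, index $i\in\{1,\ldots,l\}$, and distinct $j,k\in\{1,\ldots,q\}$. If the number of indices $n$ with $g_n\in A_q$ is already even, then $\boldsymbol\alpha\in C(q,l)$ and there is nothing more to do. Otherwise, I would invoke Lemma~\ref{same} (or its symmetric form, obtained by applying it to $(jk)g_i$) to rewrite the same neighbour as $\nu(\alpha(g_i),j,k)=\nu(\alpha(g'_i),k,j)$, where $g'_i=(jk)g_i$ has parity opposite to that of $g_i$.

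Now replace the $i$-th coordinate of $\boldsymbol\alpha$ by $\alpha(g'_i)$ to obtain $\boldsymbol\beta=(\alpha(g_1),\ldots,\alpha(g'_i),\ldots,\alpha(g_l))\in\Prod(C(S_q),l)$. The parity-flip at position $i$ turns the odd count of $A_q$-entries of $\boldsymbol\alpha$ into an even count for $\boldsymbol\beta$, so $\boldsymbol\beta\in C(q,l)$; and by the equality above, $\mu=\mu(\boldsymbol\beta,\nu(\alpha(g'_i),k,j),i)$, so $\mu$ is adjacent to $\boldsymbol\beta$. Combined with $\mu\notin\Prod(C(S_q),l)\supseteq C(q,l)$, this yields $\mu\in\Gamma_1(C(q,l))$. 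The main (in fact the only nontrivial) step is this parity-flipping trick via Lemma~\ref{same}; the rest is bookkeeping around the definitions of $\mu(\cdot,\cdot,\cdot)$ and $C(q,l)$.
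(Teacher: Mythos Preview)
Your proof is correct and follows essentially the same approach as the paper: both directions are handled identically, with the nontrivial inclusion established by the parity-flip trick via Lemma~\ref{same}, and the easy inclusion by $C(q,l)\subseteq\Prod(C(S_q),l)$ together with the fact that $\Prod(C(S_q),l)$ has minimum distance~$2$. If anything, you are slightly more careful than the paper in noting that Lemma~\ref{same} is stated only for $g\in A_q$ and explaining how the symmetric case follows.
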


\begin{proof}
Set $\mathcal{P}=\Prod(C(S_q),l)$ and $\mathcal{C}=C(q,l)$. Let
$\boldsymbol\alpha=(\alpha(g_1),\ldots,\alpha(g_l))\in\mathcal{P}$,
$\nu=\nu(\alpha(g_k),i,j)$ for some $i\neq j\leq q$, and
$\boldsymbol\mu=\mu(\boldsymbol\alpha,\nu,k)\in\Gamma_1(\mathcal{P})$.  
Suppose $\boldsymbol\alpha\notin\mathcal{C}$, and let $g_n'=g_n$ for $n\neq k$ and
$g_k'=(ij)g_k$. Since $g_k$ and $g_k'$ have different parities, it follows that
$\boldsymbol\alpha'=(\alpha(g_1'),\ldots,\alpha(g_l'))\in\mathcal{C}$. Consider
$\nu'=\nu(\alpha(g_k'),j,i)$ and
$\boldsymbol\mu'=\mu(\boldsymbol\alpha',\nu',k)$. By Lemma~\ref{same},
$\nu(\alpha(g),i,j)=\nu(\alpha(g'),j,i) \in H(q,q)$. Thus, 
$\boldsymbol\mu=\boldsymbol\mu'\in\Gamma_1(\mathcal{C})$, so
$\Gamma_1(\mathcal{P})\subseteq \Gamma_1(\mathcal{C})$. The fact that
$\Gamma_1(\mathcal{C})\subseteq\Gamma_1(\mathcal{P})$ holds because
$\mathcal{C}\subseteq \mathcal{P}$ and $\mathcal{P}$ has minimum distance $2$.
\end{proof}

\begin{pair2}\label{pair2}
$(C(q,l),(\Diag_q(S_q)\rtimes L)\wr S_l)$ is an elusive pair, with parameters
$(lq,q,3)$.
\end{pair2}

\begin{proof}
By Corollary~\ref{psq}, $X=(\Diag_q(S_q)\rtimes L)\wr S_l$ is transitive on
$\Gamma_1(\Prod(C(S_q),l))$, and this set is equal to $\Gamma_1(C(q,l))$, by
Lemma~\ref{sqn}. We now show that $C(q,l)$ is not fixed by $X$. Consider
$\boldsymbol\alpha=(\alpha(1),\ldots,\alpha(1))\in C(q,l)$ and the element
$x=(x_y,1,\ldots,1) $ in the base group of $(\Diag_q(S_q)\rtimes L)\wr S_l$,
where $y=(1 2)\in S_q$. Then $\boldsymbol\alpha^x=(\alpha( (1
2)),\alpha(1),\ldots,\alpha(1))\notin C(q,l)$. Thus $(C(q,l),X)$ is an elusive
pair. 

It remains to show that $C(q,l)$ has minimum distance $3$. Let
$\boldsymbol\alpha=(\alpha(g_1),\ldots,\alpha(g_l))$ and
$\boldsymbol\beta=(\alpha(g'_1),\ldots,\alpha(g'_l)) \in C(q,l)$, with
$\boldsymbol\alpha\neq \boldsymbol\beta$. If there exists $i\neq j$ such that
$g_i\neq g_i'$ and $g_j\neq g_j'$, then
$d(\boldsymbol\alpha,\boldsymbol\beta)\geq 2 \delta_{C(S_q)}=4$. So suppose
there exists $i$ such that $g_k=g_k'$ for all $k\neq i$. Then $g_i$ and $g_i'$
have the same parity, and so are either both from $C(A_q)$ or both from
$C(S_q\setminus A_q)$. Hence $d(\boldsymbol\alpha,\boldsymbol\beta)\geq
\delta_{C(A_q)}=3$. For equality set $g_i=1$ and $g_i'=(123)$.
\end{proof}

This completes the proof of Theorem~\ref{alternating}, as
$(C(q,l),(\Diag_q(S_q)\rtimes L)\wr S_l)$ has parameters $(lq,q,3)$. Note that
each element of $(\Diag_q(S_q)\rtimes L)\wr S_l$ either fixes $C(q,l)$ setwise,
or sends it to the code $C'(q,l)\subseteq \Prod(C(S_q),l)$, where
$\boldsymbol\alpha=(\alpha(g_1),\ldots,\alpha(g_l))\in C'(q,l)$ if and only if
$|\{i\mid g_i\in A_q\}|$ is odd. Observe also that $C(q,l)\cup C'(q,l)=\Prod(C(S_q),l)$.

\subsection{Example 3.}\label{except}

For $a\in Q$ we define $\beta(a)=(a,\ldots,a)\in H(m,q)$. We define the
\emph{repetition code} in $H(m,q)$ as 
\begin{equation*}
\Rep(m,q)=\{\beta(a)\mid a\in Q\}.
\end{equation*}
By \cite[Theorem 3.2]{fpas}, $\Rep(m,q)$ is
$\Diag_m(S_q)\rtimes L$-neighbour transitive with minimum distance $m$. We now construct our final
example, which does not share some of the properties of previous examples.

\begin{altrep}\label{altrep}
Let $C=C(A_q)\cup \Rep(q,q)$, $X=\Diag_q(S_q)\rtimes L$ and $q\geq 4$. Then
$(C,X)$ is an elusive pair and $X$ is not transitive on $\Gamma_1(C)$. Moreover,
for $q\geq 5$, $X$ is the setwise stabiliser in $\Aut(\Gamma)$ of $\Gamma_1(C)$.
\end{altrep}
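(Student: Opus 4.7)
My plan splits naturally into three steps corresponding to the three claims in the statement.

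First, I would establish that $(C,X)$ is an elusive pair. One checks that $C$ has minimum distance $3$: pairs within $C(A_q)$ have distance at least $3$, pairs within $\Rep(q,q)$ have distance $q\ge 4$, and for $\alpha(g)\in C(A_q)$ and $\beta(a)\in\Rep(q,q)$ a direct calculation gives $d(\alpha(g),\beta(a))=|\{k:k^g\ne a\}|\ge q-1\ge 3$. Hence $\Gamma_1(C)=\Gamma_1(C(A_q))\cup\Gamma_1(\Rep(q,q))$. The group $X$ preserves $\Gamma_1(C(A_q))=\Gamma_1(C(S_q))$ by Lemmas~\ref{same} and~\ref{sqnt}, and a short computation using (\ref{act}) yields $\beta(a)^{x_y\sigma(z)}=\beta(a^y)$, so $X$ also preserves $\Rep(q,q)$ and therefore $\Gamma_1(\Rep(q,q))$. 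Thus $X$ fixes $\Gamma_1(C)$ setwise. On the other hand, Lemma~\ref{notfix} provides $x\in X$ with $C(A_q)^x=C(S_q\setminus A_q)$; since codewords in $C(S_q)$ have pairwise distinct coordinates they do not lie in $\Rep(q,q)$, and so $C^x=C(S_q\setminus A_q)\cup\Rep(q,q)\ne C$.

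Second, for the non-transitivity of $X$ on $\Gamma_1(C)$, I would compare coordinate patterns of vertices in the two summands. A vertex $\nu(\alpha(g),i,j)\in\Gamma_1(C(A_q))$ takes $q-1$ distinct values with $j^g$ appearing exactly twice, whereas a vertex in $\Gamma_1(\Rep(q,q))$ has one value appearing $q-1$ times. For $q\ge 4$ these patterns are incompatible, so the two neighbour sets are disjoint, nonempty, and $X$-invariant, preventing transitivity.

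Finally, for the stabiliser claim when $q\ge 5$, I would first observe that $|\Gamma_1(C(A_q))|=\tfrac{q!}{2}q(q-1)$ differs from $|\Gamma_1(\Rep(q,q))|=q^2(q-1)$ for $q\ge 4$, so any $h\in\Aut(\Gamma)$ fixing $\Gamma_1(C)$ setwise must fix each summand separately, and in particular must fix $\Gamma_1(\Rep(q,q))$. Because $\Rep(q,q)$ has minimum distance $q\ge 5$, neither case (1) nor case (2) of \cite[Theorem~1]{ntrcodes} applies, which forces $h$ to fix $\Rep(q,q)$ setwise. A short direct calculation then identifies $\Aut(\Rep(q,q))$ with $\Diag_q(S_q)\rtimes L=X$: writing $h=(g_1,\ldots,g_q)\sigma\in S_q\wr S_q$, the requirement that $\beta(a)^h\in\Rep(q,q)$ for every $a\in Q$ forces $g_1=\cdots=g_q$. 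I expect the main obstacle to be this final reduction, where the combined use of \cite[Theorem~1]{ntrcodes} with the explicit determination of $\Aut(\Rep(q,q))$ must be done carefully; the earlier steps are essentially bookkeeping built on the lemmas already in the paper.
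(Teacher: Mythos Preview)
Your first two steps match the paper's argument closely. The gap is in your third step: the inequality $|\Gamma_1(C(A_q))|\ne|\Gamma_1(\Rep(q,q))|$ does \emph{not} force an arbitrary $h\in\Aut(\Gamma)_{\Gamma_1(C)}$ to fix each summand setwise. A permutation of a disjoint union can mix two parts of different sizes freely, so long as the union is preserved; you would need the two parts to form a system of blocks for $G$, which is precisely what is at issue. Nor can you fall back on the ``coordinate pattern'' argument from your second step, since the multiset of entry values is \emph{not} invariant under the base group $N\cong S_q^q$ of $\Aut(\Gamma)$ (different $g_i$ act on different coordinates, so e.g.\ $(1,1,2,3)$ can be sent to $(1,1,1,3)$).

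The paper circumvents this by reversing the logic. It first records, from \cite{ntrcodes} and \cite[Theorem~3.2]{fpas}, that $\Aut(\Gamma)_{\Gamma_1(R)}=\Aut(R)=X$ (the same identification you make at the end). Then, assuming some $x\in G\setminus X$ exists, it follows that $x$ does \emph{not} fix $\Gamma_1(R)$, hence moves some vertex between the two $X$-orbits; combined with the transitivity of $X$ on each orbit, this makes $G$ transitive on all of $\Gamma_1(C)$. Transitivity forces $|\Gamma_1(\mu)\cap\Gamma_1(C)|$ to be constant over $\mu\in\Gamma_1(C)$, and an explicit count at $\mu=(1,1,3,4,\ldots)\in\Gamma_1(C(A_q))$ versus $\nu=(2,1,1,\ldots)\in\Gamma_1(R)$ gives $3(q-2)$ versus $q-2$, a contradiction. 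You could repair your argument by using this same local count (constant on each part since $X$ is transitive there) to show directly that $G$ preserves the two summands; but as written, the cardinality step does not go through.
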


\begin{proof}
Let $R=\Rep(q,q)$. Note that $\delta_{C(A_q)}=3$ and $\delta_{R}=q$. If
$\alpha\in C(A_q)$ and $\beta\in R$ then $d(\alpha,\beta)=q-1\geq 3$, so
$\Gamma_1(C)=\Gamma_1(C(A_q))\cup\Gamma_1(R)$. By Lemma~\ref{same} and Corollary~\ref{elusive1}, $X$
is transitive on $\Gamma_1(C(A_q))$ and, as mentioned above, $X$ is also
transitive on $\Gamma_1(R)$. In particular, $X$ fixes $\Gamma_1(C)$ setwise. It
follows from Lemma~\ref{notfix} that any element of $X$ either
fixes $C$ setwise, or sends it to $C'=C(S_q\setminus A_q)\cup R$. Thus $(C,X)$
is an elusive pair with parameters $(q,q,3)$. There are, however, two $X$-orbits
in $\Gamma_1(C)$, so $X$ is not transitive on $\Gamma_1(C)$.

Let $G=\Aut(\Gamma)_{\Gamma_1(C)}$, the setwise stabiliser in $\Aut(\Gamma)$ of
$\Gamma_1(C)$. We now show that $X=G$ when $q\geq 5$. By \cite{ntrcodes}, since
$\delta_{R}\geq5$, $\Aut(\Gamma)_{\Gamma_1(R)}=\Aut(R)$ and, by \cite[Theorem
3.2]{fpas}, $\Aut(R)=X$. Suppose there exists $x\in G\setminus X$. Then because
$\Gamma_1(C(A_q))$ and $\Gamma_1(R)$ are both $X$-orbits, it follows that $G$
acts transitively on $\Gamma_1(C)$.  Therefore, the number, $|\Gamma_1(\mu)\cap \Gamma_1(C)|$, of neighbours of the code adjacent to
$\mu\in\Gamma_1(C)$ is independent of the choice of $\mu$.  

Now we inspect the neighbours of $\mu=(1,1,3,4,\ldots)\in\Gamma_1(C)$. Changing
the first entry to 2 gives us a vertex in $\Gamma_2(C)$, but the other $q-2$ choices give
us a vertex in $\Gamma_1(C)$. Changing the second entry to 2 gives us the codeword $\alpha(1)$, 
however the other $q-2$ choices give vertices in $\Gamma_1(C)$. For $3\leq
i\leq q$, replacing the $i$-th entry with 2 gives us a vertex in $\Gamma_1(C)$,
while the other $q-2$ choices give vertices in $\Gamma_2(C)$. Thus
$|\Gamma_1(\mu)\cap \Gamma_1(C)|=3(q-2)$. Now let
$\nu=(2,1,1,1,\ldots)\in\Gamma_1(C)$. The adjacent vertex with 1 in the first
entry is in $C$, but the $q-2$ other vertices that differ in the first entry are
in $\Gamma_1(C)$. Changing any other entry gives a vertex which is always in
$\Gamma_2(C)$, since $q\geq 5$.  Thus $|\Gamma_1(\nu)\cap \Gamma_1(C)|=q-2\neq
|\Gamma_1(\mu)\cap \Gamma_1(C)|$, which is a contradiction.
\end{proof}

In the case $q=4$, let $h=(13)(24)$ and $x=(1,1,h,h)\in \Aut(\Gamma)$. A
straightforward, but somewhat lengthy, calculation shows that $x$ fixes
$\Gamma_1(C)$ and maps the vertex $(1,1,3,4)\in\Gamma_1(C(A_q))$ to
$(1,1,1,2)\in\Gamma_1(\Rep(q,q))$.  In this case, $(C,X)$ is an elusive pair 
for the group $X=\langle \Diag_4(S_4)\rtimes L,x\rangle$, but $X$ acts transitively on $\Gamma_1(C)$.  

The first section of the proof of Proposition~\ref{altrep} also shows that the
image of $C$ under any $x\in X$ that does not fix $C$ is $C'=C(S_q\setminus
A_q)\cup \Rep(q,q)$, and we note that $C\cap  C'=\Rep(q,q)\neq \phi$.

\subsection{Non-Existence of Elusive Codes with Parameters
$(4,3,3)$.}\label{case}

Now we proceed to show that it is not possible to have an elusive code of length
four, with minimum distance three and an alphabet size three. First we introduce
some results and notation from \cite{ntrcodes}. 

We say that two codes, $C$ and $C'$, in $H(m,q)$, are \textit{equivalent} if
there exists $y\in \Aut(\Gamma)$ such that $C^y=C'$. Equivalence preserves
minimum distance. (See \cite[Lemma 4]{ntrcodes}). The next lemma is an extension
of \cite[Lemma 1]{ntrcodes}.

\begin{neigh}\label{neigh}
If $\alpha$ and $\beta$ are in $H(m,q)$ with $d(\alpha,\beta)=2$, then there are
precisely two distinct vertices, $\mu$ and $\nu$, in
$\Gamma_1(\alpha)\cap\Gamma_1(\beta)$, and $d(\mu,\nu)=2$. Moreover, given
$\alpha,\mu$ and $\nu$ there is only one possible choice for $\beta$.
\end{neigh}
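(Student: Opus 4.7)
The plan is to argue directly from the combinatorial definition of distance in the Hamming graph, pinning down where the entries of any common neighbour $\mu$ must coincide with $\alpha$ and $\beta$. Write $S = \{i, j\}$ for the two positions where $\alpha$ and $\beta$ differ, so $\alpha_i \neq \beta_i$, $\alpha_j \neq \beta_j$, and $\alpha_k = \beta_k$ for $k \notin S$. Any common neighbour $\mu \in \Gamma_1(\alpha) \cap \Gamma_1(\beta)$ differs from $\alpha$ at exactly one position $k$, and the first step is to show that $k \in S$.

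If $k \notin S$, then $\mu_k \neq \alpha_k = \beta_k$, while $\mu_i = \alpha_i \neq \beta_i$ and $\mu_j = \alpha_j \neq \beta_j$, giving $d(\mu, \beta) \geq 3$, a contradiction. Hence $k \in \{i, j\}$, and a symmetric analysis with $\beta$ in place of $\alpha$ forces $\mu$ to differ from $\beta$ at the other element of $S$. In the case $k = i$ this requires $\mu_i = \beta_i$, and in the case $k = j$ it requires $\mu_j = \beta_j$. Every entry of $\mu$ is then determined, producing exactly two candidates, which I would label $\mu$ (the one with $k = i$) and $\nu$ (the one with $k = j$). A direct check confirms both lie in $\Gamma_1(\alpha) \cap \Gamma_1(\beta)$, and comparing entries coordinate-by-coordinate shows that $\mu$ and $\nu$ agree off $S$ but differ at both positions of $S$, giving $d(\mu, \nu) = 2$.

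For the \emph{moreover} clause, observe that from the triple $(\alpha, \mu, \nu)$ one recovers $i$ as the unique coordinate in which $\alpha$ and $\mu$ differ, and $j$ as the unique coordinate in which $\alpha$ and $\nu$ differ. These must be distinct, since otherwise $\mu$ and $\nu$ would both differ from $\alpha$ at a single common position, forcing $d(\mu, \nu) \leq 1$ and contradicting the first part. Then $\beta$ is uniquely reconstructed by setting $\beta_i = \mu_i$, $\beta_j = \nu_j$, and $\beta_k = \alpha_k$ for all other $k$.

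The argument is essentially bookkeeping on coordinates, and the only subtle point is ensuring $i \neq j$ in the uniqueness step; this falls out immediately once one has established $d(\mu, \nu) = 2$, so no real obstacle is expected.
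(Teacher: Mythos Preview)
Your proof is correct and follows essentially the same coordinate-bookkeeping idea as the paper. The one structural difference is that the paper invokes \cite[Lemma~1]{ntrcodes} for the count $|\Gamma_1(\alpha)\cap\Gamma_1(\beta)|=2$ and then argues by contradiction that the two common neighbours cannot differ from $\alpha$ in the same entry, whereas you derive both the count and the explicit form of $\mu,\nu$ directly from the two positions where $\alpha$ and $\beta$ differ. Your version is therefore self-contained, while the paper's is slightly shorter by outsourcing the enumeration; otherwise the arguments and the reconstruction of $\beta$ coincide.
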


\begin{proof}
By \cite[Lemma 1]{ntrcodes}, $|\Gamma_1(\alpha)\cap\Gamma_1(\beta)|=2$. We know
$\mu,\nu\in\Gamma_1(\alpha)\cap\Gamma_1(\beta)$ each differ from $\alpha$ in one
entry, say $\mu_i\neq\alpha_i$ and $\nu_j\neq\alpha_j$. If $i\neq j$, then
$d(\mu,\nu)=2$. Suppose $i=j$. Then $\beta_i$ is not equal to at least one of
$\mu_i$ or $\nu_i$, since $\mu_i\neq\nu_i$. We know $d(\beta,\mu)=1$, and so if
$\beta_i\neq\mu_i$ then $\beta_l=\mu_l=\alpha_l$ for $l\neq i$, a contradiction
since $d(\alpha,\beta)=2$. A similar argument rules out the case $\beta_i\neq
\nu_i$, and we are left with $d(\mu,\nu)=2$ and $\beta_i=\mu_i, \beta_j=\nu_j$
and $\beta_l=\alpha_l$ for $l\neq i,j$.
\end{proof}

Let $(C,X)$ be an elusive pair in $H(m,q)$ with $\delta\geq 3$. Suppose
$\alpha\in C$ and $x\in X$ such that $\alpha^x\notin C$. A \textit{pre-codeword}
of $\alpha$ with respect to $x$ is a vertex $\pi$ such that $d(\alpha,\pi)=2$
and $\pi^x\in C$ \cite[Definition 3]{ntrcodes}. We denote the set of all
pre-codewords of $\alpha$ with respect to $x$ by $\Pre(\alpha,x)$. 

\begin{partition}\label{partition}
Let $(C,X)$ be an elusive pair in $H(m,q)$ with $\delta\geq 3$, ${\alpha}\in C$,
$x\in X$ such that $\alpha^x\notin C$, and $\pi\in\Pre(\alpha,x)$. Then 
\begin{itemize}
\item[(i)]{$\{\Gamma_1(\alpha)\cap\Gamma_1(\pi')\mid \pi'\in\Pre(\alpha,x)\}$
forms a partition of $\Gamma_1(\alpha)$.}
\item[(ii)]{$\{\Gamma_1(\pi)\cap\Gamma_1(\beta)\mid \beta\in\Gamma_2(\pi)\cap
C\}$ forms a partition of $\Gamma_1(\pi)$.}
\item[(iii)]{Let $\mu,\nu\in \Gamma_1(\alpha)\cap\Gamma_1(\pi)$ differ from
$\alpha$ in entries $i,j$ respectively, and $\pi' \in \Pre(\alpha,x)\setminus
\pi$. Then there exists  $\nu'\in \Gamma_1(\alpha)\cap\Gamma_1(\pi')$ that
differs from $\alpha$ in some entry $k\neq i,j$. }
\end{itemize}
\end{partition}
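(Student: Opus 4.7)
All three parts rest on one rigidity principle: $X$ stabilises $\Gamma_1(C)$ setwise, $\Gamma_1(C)\cap C=\emptyset$, and $\delta\geq 3$ forces distinct codewords to be at distance at least three.  For part (i), fix $\nu\in\Gamma_1(\alpha)$.  Since $d(\alpha,\nu)=1<\delta$ we have $\nu\notin C$, hence $\nu\in\Gamma_1(C)$ and $\nu^x\in\Gamma_1(C)$, so there is $\gamma\in C$ with $d(\nu^x,\gamma)=1$; set $\pi'=\gamma^{x^{-1}}$.  Then $(\pi')^x=\gamma\in C$ and $d(\alpha,\pi')\leq 2$.  The possibility $d(\alpha,\pi')=0$ gives $\alpha^x=\gamma\in C$, contradicting $\alpha^x\notin C$; while $d(\alpha,\pi')=1$ forces $\pi'\in\Gamma_1(\alpha)\subseteq\Gamma_1(C)$, whence $\gamma=(\pi')^x\in\Gamma_1(C)\cap C=\emptyset$, also absurd.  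Hence $d(\alpha,\pi')=2$ and $\pi'\in\Pre(\alpha,x)$.  Disjointness is immediate: if $\nu\in\Gamma_1(\pi')\cap\Gamma_1(\pi'')$ with $\pi'\neq\pi''$ in $\Pre(\alpha,x)$, then $d((\pi')^x,(\pi'')^x)\leq 2<\delta$, contradicting that both are distinct codewords.

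For part (ii), I would apply (i) with the pair $(\pi^x,x^{-1})$ in place of $(\alpha,x)$; this is legitimate because $\pi^x\in C$, $x^{-1}\in X$, and $(\pi^x)^{x^{-1}}=\pi\notin C$ (since $d(\alpha,\pi)=2<\delta$ forces $\pi\notin C$).  The resulting partition of $\Gamma_1(\pi^x)$ is indexed by $\Pre(\pi^x,x^{-1})$, and the map $\tau\mapsto\tau^{x^{-1}}$ is a bijection between $\Pre(\pi^x,x^{-1})$ and $\Gamma_2(\pi)\cap C$.  Pushing the partition through the isometry $x^{-1}$, which carries $\Gamma_1(\pi^x)$ onto $\Gamma_1(\pi)$, delivers the partition in (ii).

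For part (iii), by Lemma~\ref{neigh} $\pi$ differs from $\alpha$ in precisely the entries $i,j$.  Write $\Gamma_1(\alpha)\cap\Gamma_1(\pi')=\{\nu_1,\nu_2\}$, where $\nu_s$ differs from $\alpha$ in entry $k_s$ and $k_1\neq k_2$; then $\pi'$ agrees with $\alpha$ outside $\{k_1,k_2\}$.  Suppose for contradiction $\{k_1,k_2\}\subseteq\{i,j\}$, so that $\{k_1,k_2\}=\{i,j\}$.  Then both $\pi$ and $\pi'$ agree with $\alpha$ outside $\{i,j\}$, hence $d(\pi,\pi')\leq 2$ and therefore $d(\pi^x,(\pi')^x)\leq 2<\delta$, contradicting $\pi^x\neq(\pi')^x$ both lying in $C$.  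So some $\nu_s$ differs from $\alpha$ in an entry $k\notin\{i,j\}$, and $\nu':=\nu_s$ satisfies (iii).

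The only step that essentially uses the elusive-pair hypothesis is excluding $d(\alpha,\pi')=1$ in (i), which combines $X$-invariance of $\Gamma_1(C)$ with $\Gamma_1(C)\cap C=\emptyset$.  Beyond that, everything is distance arithmetic against $\delta\geq 3$, and I do not anticipate substantive obstacles.
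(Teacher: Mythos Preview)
Your proposal is correct. For part~(iii) your argument is essentially identical to the paper's: both identify the support $\{i,j\}$ of $\pi$ via Lemma~\ref{neigh}, observe that if the support of $\pi'$ coincided with $\{i,j\}$ then $d(\pi,\pi')\leq 2$ would contradict $\pi^x,(\pi')^x\in C$ with $\delta\geq 3$, and conclude that some common neighbour of $\alpha$ and $\pi'$ lies outside coordinates $i,j$.

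For parts~(i) and~(ii) the paper does not argue at all but simply cites \cite[Lemma~6(i)]{ntrcodes} and \cite[Lemma~7(ii)]{ntrcodes}. Your self-contained treatment of~(i) is the natural one and presumably matches what is in that reference. Your handling of~(ii), however, is worth highlighting: rather than reproving the partition statement from scratch, you observe that $(\pi^x,x^{-1})$ satisfies the same hypotheses as $(\alpha,x)$, apply~(i) to obtain a partition of $\Gamma_1(\pi^x)$ indexed by $\Pre(\pi^x,x^{-1})$, and then transport everything through the isometry $x^{-1}$, noting that $\tau\mapsto\tau^{x^{-1}}$ bijects $\Pre(\pi^x,x^{-1})$ onto $\Gamma_2(\pi)\cap C$. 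This symmetry reduction is cleaner than treating~(ii) as an independent claim, and it makes transparent why the two partitions have the same structure.
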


\begin{proof}
For a proof of (i) see \cite[Lemma 6 (i)]{ntrcodes}, and of (ii) see \cite[Lemma
7 (ii)]{ntrcodes}. 

For part (iii), $d(\alpha,\pi')=2$, so $\alpha$ and $\pi'$ differ in exactly two
entries $i'$ and $j'$. If $\{i,j\}=\{i',j'\}$, then $d(\pi,\pi')=1$ or $2$,
since $\pi\neq\pi'$. However $d(\pi,\pi')=d(\pi^x,{\pi'}^x) \geq 3$, since
$\pi^x,{\pi'}^x\in C$ and $\delta\geq 3$. So there must be some value
$k\notin\{i,j\}$ and then, by Lemma~\ref{neigh}, vertex $\nu'\in
\Gamma_1(\alpha)\cap\Gamma_1(\pi')$ with $\nu_l'=\alpha_l$, for $l\neq k$, and
$\nu_k'=\pi_k'$.
\end{proof}

Note that, by \cite[Lemma 1]{ntrcodes}, each part in the above partitions has
size $2$, since the distance between a codeword and pre-codeword is $2$. The
partitions themselves have size $m(q-1)/2$, by \cite[Lemma 6 (ii)]{ntrcodes} and
\cite[Lemma 7 (iii)]{ntrcodes}.

\begin{q3m4}\label{q3m4}
There is no elusive pair with parameters $(4,3,3)$. 
\end{q3m4}

\begin{proof}
Let $(C,X)$ be an elusive pair with parameters $(4,3,3)$. By replacing $C$ with
an equivalent code if necessary, we can assume that ${\bf{0}}=0000\in C$ and
that there exists $x\in X$ such that ${\bf{0}}^x\notin C$. First we determine, up to
equivalence, four members of $\Pre(\boldsymbol{0},x)$. By \cite{ntrcodes} it
follows that $|\Pre({\bf{0}},x)|=4$ and that $\Gamma_1(\pi)\subseteq\Gamma_1(C)$ for each $\pi\in\Pre({\bf{0}},x)$.  By
Lemma \ref{partition} (i), $\mathcal{P}_0=\{\Gamma_1({\bf{0}})\cap\Gamma_1(\pi)\mid\pi\in\Pre({\bf{0}},x)\}$ 
forms a partition of $\Gamma_1({\bf{0}})$, and by Lemma~\ref{neigh}, each part of this partition
consists of two vertices.

Consider $\{1000,\nu_1\}\in\mathcal{P}_0$.  By Lemma \ref{neigh}, $\nu_1\neq
2000$.  Thus, by replacing
$C$ with an equivalent code if necessary, we can assume that $\nu_1=0100$ and,
again by Lemma~\ref{neigh}, $\pi_1=1100\in\Pre({\bf{0}},x)$.
Next, consider $\{2000,\nu_2\}\in\mathcal{P}_0$.  By Lemma \ref{partition}
(iii), $\nu_2\neq 0200$, and so again,
using the symmetries of the Hamming graph, we can assume that $\nu_2=0020$. 
Therefore $\pi_2=2020\in\Pre({\bf{0}},x)$.
Next, consider $\{0200,\nu_3\}\in\mathcal{P}_0$.  If $\nu_3=0010$, this implies
that $\{0001,0002\}\in\mathcal{P}_0$,
contradicting Lemma \ref{neigh}.  Thus, as before, we can assume that
$\nu_3=0002$ and $\pi_3=0202\in\Pre({\bf{0}},x)$.
Consequently we deduce that $\pi_4=0011\in\Pre({\bf{0}},x)$.

Next we determine three additional elements of $C$. By Lemma \ref{partition}
(ii), $\mathcal{P}_1=\{\Gamma_1(\pi_1)\cap\Gamma_1(\alpha)\mid\alpha\in\Gamma_2(\pi_1)\cap C\}$
forms a partition of $$\Gamma_1(\pi_1)=\left\{\begin{array}{llll}
0100 & 1000 & 1110 & 1101 \\
2100 & 1200 & 1120 & 1102
\end{array}\right\},$$ and by Lemma~\ref{neigh},
each part has size $2$.  We know that $\{1000,0100\}\in\mathcal{P}_1$ as
${\bf{0}}\in\Gamma_2(\pi_1)\cap C$.  Furthermore, by Lemma \ref{neigh},
$\{1110,1120\}$ and $\{1101,1102\}$ are not elements of $\mathcal{P}_1$.  This
implies that at least one of $1110$
or $1120$ forms an element of $\mathcal{P}_1$ with either $1101$ or $1102$. 
Thus
at least one of $1111$, $1121$, $1122$ or $1112$ is a codeword
in $\Gamma_1(\pi_1)\cap C$.  Consider $1020\in\Gamma_1(\pi_2)$, which must
be adjacent to a codeword with three non-zero entries, as $\delta=3$ and
${\bf{0}}\in C$. Such
a codeword has the form $1a2b$, and is at distance at least $2$ from $1121$ and $1122$,
so these
vertices are not codewords. By considering $0102\in\Gamma_1(\pi_3)$,
a similar argument shows that $1112$ is not a codeword either.  Thus $1111\in C$
and $\{1110,1101\}\in\mathcal{P}_1$.
Consider the part $\{2100,\nu\}\in\mathcal{P}_1$. By Lemma \ref{partition}
(iii), $\nu\neq 1200$, and if
$\nu=1120$ then $2120\in C$, contradicting the fact that
$2120\in\Gamma_1(\pi_2)$.  Thus $\nu=1102$, which
leaves $\{1200,1120\}\in\mathcal{P}_1$.  Hence $\Gamma_2(\pi_1)\cap
C=\{0000,1111,1220,2101\}$.
Finally, consider the partition
$\mathcal{P}_2=\{\Gamma_1(\pi_2)\cap\Gamma_1(\alpha)\mid\alpha\in\Gamma_2(\pi_2)\cap C\}$
of $$\Gamma_1(\pi_2)=\left\{
\begin{array}{llll}
0020 & 2120 & 2000 & 2021 \\
1020 & 2220 & 2010 & 2022
\end{array}\right\}.$$
As ${\bf{0}}$, $1220\in\Gamma_2(\pi_2)\cap C$ it follows that $\{0020,2000\}$,
$\{1020,2220\}\in\mathcal{P}_2$.  Consider
the part $\{2021,\mu\}\in\mathcal{P}_2$.  By Lemma \ref{neigh}, $\mu\neq 2022$. 
Thus $\mu=2120$ or $2010$ and so $\alpha=2121$ or $2011\in C$
respectively.  However, in both cases $d(\alpha,1111)=2$, which is a
contradiction.  Thus no such elusive pair exists.
\end{proof}

\section[]{acknowledgements} The second author is supported by the Australian Research Council Federation Fellowship FF0776186 of the third author.

\end{document}